\documentclass[11pt]{article}
\usepackage{amsmath, amssymb}
\usepackage{mathrsfs}
\usepackage[margin=1in]{geometry}
\usepackage{amsmath,amssymb,amsthm,mathtools}
\usepackage{enumitem}
\usepackage[hidelinks]{hyperref}
\usepackage{indentfirst}
\usepackage[compress,numbers,square]{natbib}

\theoremstyle{definition}
\newtheorem{theorem}{Theorem}[section]
\newtheorem{lemma}[theorem]{Lemma}
\newtheorem{corollary}[theorem]{Corollary}
\newtheorem{proposition}[theorem]{Proposition}

\newtheorem{conjecture}{Conjecture}
\newtheorem{definition}[theorem]{Definition}

\DeclareMathOperator{\diam}{diam}

\newcommand{\E}{\mathcal E}
\newcommand{\Kemeny}{\mathcal K}

\begin{document}

\begin{center}
			{{\huge On two conjectures concerning Kemeny's constant of graphs}} \\[18pt]
			{\Large  Wei Li$^{1}$, Wensheng Sun$^{2}$, Yujun Yang$^{1}$*\footnotetext{*Corresponding author at E-mail address: yangyj@ytu.edu.cn}}\\[6pt]
			{ \footnotesize  1. School of Mathematics and Information Sciences,Yantai University,Yantai 264005 P.R. China\\
				2. School of Mathematics and Statistics, Lanzhou University, Lanzhou 730000 P.R. China}
\end{center}

\vspace{1mm}

\begin{abstract}

Kemeny's constant for a connected graph $G$, denoted by $\mathcal{K}(G)$, is the expected time for a random walk to reach a randomly chosen vertex $u$, regardless of the choice of the initial vertex. Recently, Kim et al. (2026) proposed two conjectures on Kemeny's constant. The first conjecture asserts that if $G$ is a connected graph of order $n$ and diameter 2, then $\mathcal{K}(G) = O(n)$. The second conjecture asserts that if $G$ be a graph of order $n$, then $\min\{\mathcal{K}(G), \mathcal{K}(\overline{G})\} = O(n)$, and if both $G$ and $\overline{G}$ are connected, then $\mathcal{K}(G)\mathcal{K}(\overline{G}) = O(n^4)$, where 
$\overline{G}$ denotes the complement of $G$. In this paper, we confirm both conjectures. For the first conjecture, we prove that if $G$ is a connected graph of order $n$ and diameter 2, then
\[
\mathcal{K}(G) \leq (3 + \sqrt{5})(n - 1).
\]
For the second conjecture, we prove that for any \( n \)-vertex graph \( G \),
\[
\min\{\mathcal{K}(G), \mathcal{K}(\overline{G})\} \leq (8 + 2\sqrt{5})n - (10 + 2\sqrt{5}).
\]
Moreover, if both \( G \) and \( \overline{G} \) are connected, then
\[
\mathcal{K}(G)\mathcal{K}(\overline{G}) \leq \frac{3 + \sqrt{5}}{2}n^4.
\]
Our proof relies on effective estimates on resistance distances and spectral gaps of graphs.

\noindent {\bf Keywords:} Kemeny's constant, resistance distance, diameter two, Nordhaus--Gaddum problem, spectral gap  \\
\vspace{1mm}
\noindent {\bf AMS Classification: } 05C12, 05C50, 60J10.
\end{abstract}

\section{Introduction}
Let $G$ be a connected graph. The \textit{Kemeny's constant} of $G$, denoted by $\Kemeny(G)$, is a fundamental invariant arising from the theory of finite ergodic Markov chains and was first introduced by Kemeny and Snelll~\cite{Kemeny1969}. For a simple random walk on $G$, this constant measures the expected hitting time from a fixed vertex to a target vertex chosen independently according to the stationary distribution. Mathematically, it can be expressed as
\begin{equation}
\Kemeny(G)=\sum\limits_{j}\frac{d_G(j)}{2m}E_iT_j,
\end{equation}
where $d_G(j)$ is degree of vertex $j$ in $G$ and $E_iT_j$ is the expected hitting time from vertex $i$ to vertex $j$. A remarkable feature of Kemeny's constant is that  quantity is independent of the choice of the starting vertex $i$, making it a global measure of graph connectivity. It also has mathematical applications in graph theory,  in robotics \cite{Patel2015}, network science \cite{Altafini2023, Crisostomi2011,  Kells2020, Li2021}, and mathematical chemistry \cite{Li2019}.

Kemeny's constant is closely related to several important graph parameters, including effective graph resistance~\cite{Wang2017}, the Kirchhoff index~\cite{Palacios2021}, Randi\'{c} energy, and Laplacian incidence energy~\cite{Milovanovic2019}. Recall that the conventional distance function on a graph $G$ is the shortest-path distance, where the \textit{distance} $d_G(u,v)$ between two vertices $u,v\in V(G)$ is the length of a shortest path joining them. Another distance function, called the resistance distance, was introduced by Klein and Randi\'{c}~\cite{Klein1993}. For $u,v\in V(G)$, the \textit{resistance distance} $r_G(u,v)$ is defined as the effective resistance between the corresponding nodes in the electrical network obtained from $G$ by replacing each edge with a $1$-ohm resistor. It is known that $r_G(u,v)\leq d_G(u,v)$, with equality if and only if there is a unique path connecting $u$ and $v$ in $G$. In this context, Palacios and Renom~\cite{Palacios2011} established the following elegant identity relating Kemeny's constant to the resistance distances of a graph.
\begin{equation} \label{eq1.2}
			\Kemeny(G) = \frac{1}{4m} \sum_{u \in V(G)} \sum_{v \in V(G)} d_G(u)d_G(v) r_G(u,v).
\end{equation}

 As a fundamental parameter characterizing the structural properties of graphs, Kemeny's constant has recently attracted significant research attention. On the one hand, extremal values of Kemeny's constant have been investigated for various graph families in \cite{Breen2019, Jang2023, Liao2026, Yang2026}. On the other hand, the upper and lower bounds for Kemeny's constant have been extensively studied.
Brightwell and Winkler~\cite{Brightwell1990} established a tight general bound of $\Kemeny(G) = O(n^2)$ for any connected graph on $n$ vertices.
 Later,  Ciardo, Dahl and Kirkland~\cite{ciardo2020} established sharp upper and lower bounds for the Kemeny's constant of trees with fixed order and diameter, and characterized the extremal trees attaining these bounds. After that, Wang and Tao~\cite{wang2023}  derived the tight bounds for the Kemeny's constant of trees with a prescribed matching number, and identified the corresponding extremal structures. Recently, Kim et al.~\cite{Kim2026} studied Nordhaus--Gaddum problems for Kemeny's constant and obtained several bounds under degree conditions and for certain graph classes.

In graph theory, the diameter is one of the most fundamental parameters of a graph. The \emph{diameter} of a connected graph $G$, denoted by $\operatorname{diam}(G)$, is the maximum distance between any two vertices of $G$. Graphs of diameter two form a natural and important class. Despite the strong restriction on their diameter, these graphs exhibit considerable structural diversity, which has motivated extensive research on various graph invariants in this setting. For instance, Ma et al.~\cite{Ma2021} studied the pancyclicity and $1$-Hamiltonicity of line graphs of diameter-two graphs. Dubickas~\cite{Dubickas2021} subsequently established upper and lower bounds for the maximum total domination number of graphs of diameter two. Moreover, extremal graphs of diameter two have been determined for several resistance-distance-based invariants, including the Kirchhoff index~\cite{palacios2015}, the sum eccentric resistance distance~\cite{He2017}, and the multiplicative eccentricity resistance distance and its spectral radius~\cite{Hong2019}. More recently, Kim et al.~\cite{Kim2026} proposed the following conjecture concerning Kemeny's constant of graphs of diameter two.

\begin{conjecture}[\cite{Kim2026}]\label{conjecture1}
Let $G$ be a connected graph of order $n$ with $\diam(G)=2$. Then $\Kemeny(G)=O(n)$.
\end{conjecture}

A central direction in extremal graph theory is the Nordhaus--Gaddum problem, which studies the behavior of a graph parameter on a graph and its complement. Originally introduced for the chromatic number~\cite{Nordhaus1956}, Nordhaus--Gaddum questions have since been studied for a wide variety of graph invariants, including the matching number~\cite{Lin2017}, the second $A_\alpha$-eigenvalue~\cite{Chen2020}, the spectral radius~\cite{Hong2000}, the Kirchhoff index~\cite{Yang2011}, and the Wiener and Zagreb indices~\cite{Zhang2005}. For Kemeny's constant, Kim et al.~\cite{Kim2026} proposed the following conjecture. Following their convention, we set $\Kemeny(H)=\infty$ whenever $H$ is disconnected.

\begin{conjecture}[\cite{Kim2026}]\label{conjecture2}
Let $G$ be a graph of order $n$. Then
$$
\min\{\Kemeny(G),\Kemeny(\overline{G})\}=O(n).
$$
Moreover, if both $G$ and $\overline{G}$ are connected, then
$$
\Kemeny(G)\Kemeny(\overline{G})=O(n^4).
$$
\end{conjecture}

The results of this paper resolve both conjectures. First, for every connected graph $G$ of diameter two and every pair of distinct vertices $u,v\in V(G)$, we prove
$$
r_G(u,v)\leq(3+\sqrt5)\left(\frac{1}{d_G(u)}+\frac{1}{d_G(v)}\right).
$$
As a consequence, $\Kemeny(G)\leq(3+\sqrt5)(n-1)$, proving Conjecture~\ref{conjecture1}. We then resolve Conjecture~\ref{conjecture2} in full. In particular, for every graph $G$ of order $n\geq2$,
$$
\min\{\Kemeny(G),\Kemeny(\overline{G})\}\leq(8+2\sqrt5)n-(10+2\sqrt5),
$$
while for every connected complementary pair,
$$
\Kemeny(G)\Kemeny(\overline{G})\leq\frac{3+\sqrt5}{2}\,n^4.
$$
The proof of the minimum bound combines a structural property of complementary diameter-three graphs with the Nordhaus--Gaddum spectral-gap theorem of Kim and Madras~\cite{KimMadras2025}. More precisely, we show that a graph having a dominating set of size two satisfies a linear bound for all but the reciprocal of the second normalized Laplacian eigenvalue in the spectral expression for Kemeny's constant. This reduces the remaining diameter-three case to the spectral-gap estimate of Kim and Madras.

The remainder of the paper is organized as follows. In Section~2, we introduce the notation and recall standard facts about Dirichlet energy, voltage functions, the Laplacian operator, and the normalized Laplacian. In Section~3, we establish a key energy estimate for two disjoint vertex sets whose pairwise distances are at most two. In Section~4, we derive the effective-resistance bound for graphs of diameter two and its consequences for Kemeny's constant and the Kirchhoff index. In Section~5, we prove the Nordhaus--Gaddum minimum and product bounds and thereby resolve Conjecture~\ref{conjecture2}. Finally, in Section~6, we conclude the paper.

\section{Preliminaries}

In this section, we introduce the notation and facts used throughout the paper. All graphs considered are finite, simple, undirected, and unweighted. Unless stated otherwise, graphs are connected. Let $G=(V(G),E(G))$ be a graph of order $n$ and size $m$. The complement of $G$, denoted by $\overline{G}$, is the graph on $V(G)$ in which two distinct vertices are adjacent if and only if they are nonadjacent in $G$. For $v\in V(G)$, let $N_G(v)$ and $d_G(v)$ denote its neighborhood and degree, respectively. We write $\delta(G)$ and $\Delta(G)$ for the minimum and maximum degrees of $G$, respectively.

Let $A_G$ and $D_G$ denote the adjacency matrix and the diagonal degree matrix of $G$, respectively. The Laplacian matrix of $G$ is $L_G=D_G-A_G$. Since $G$ is connected, every vertex has positive degree, and the normalized Laplacian of $G$ is defined by
$$
\mathcal{L}_G=D_G^{-1/2}L_GD_G^{-1/2}=I-D_G^{-1/2}A_GD_G^{-1/2}.
$$
Write the eigenvalues of $\mathcal{L}_G$ as
$$
0=\lambda_1(\mathcal{L}_G)<\lambda_2(\mathcal{L}_G)\leq\cdots\leq\lambda_n(\mathcal{L}_G)\leq2.
$$
The transition matrix of the simple random walk on $G$ is $P_G=D_G^{-1}A_G$. Let its eigenvalues be
$$
1=\theta_1>\theta_2\geq\cdots\geq\theta_n\geq-1.
$$
Although $P_G$ is not necessarily symmetric, it is similar to the symmetric matrix $D_G^{-1/2}A_GD_G^{-1/2}$, since
$$
D_G^{1/2}P_GD_G^{-1/2}=D_G^{-1/2}A_GD_G^{-1/2}=I-\mathcal{L}_G.
$$
Therefore, $P_G$ and $I-\mathcal{L}_G$ have the same eigenvalues, and hence
$$
\lambda_i(\mathcal{L}_G)=1-\theta_i,\qquad 1\leq i\leq n.
$$
On the other hand, the standard spectral representation of Kemeny's constant gives
\begin{equation}\label{eq:kemeny-spectrum}
\Kemeny(G)=\sum_{i=2}^{n}\frac{1}{1-\theta_i}
=\sum_{i=2}^{n}\frac{1}{\lambda_i(\mathcal{L}_G)}.
\end{equation}
Moreover, the spectral gap of the simple random walk on $G$ is defined by $\operatorname{Gap}(G)=1-\theta_2$ \cite{KimMadras2025}. Consequently,
$$
\operatorname{Gap}(G)=\lambda_2(\mathcal{L}_G).
$$

\begin{definition}\label{def2.1}
Let $G$ be a connected graph and let $w:V(G)\to\mathbb{R}$ be a real-valued function. The Dirichlet energy of $w$ on $G$ is defined by
$$
\mathcal{E}_G(w)=\sum_{xy\in E(G)}\bigl(w(x)-w(y)\bigr)^2=w^{\top}L_Gw,
$$
where $w$ is viewed as a column vector indexed by $V(G)$.
\end{definition}

\begin{definition}
Let $G$ be a connected graph. A function $h:V(G)\to\mathbb{R}$ is said to be \emph{harmonic at} $x\in V(G)$ if
$$
h(x)=\frac{1}{d_G(x)}\sum_{y\in N_G(x)}h(y).
$$
\end{definition}

\begin{definition}
Let $G$ be a connected graph. For a function $\phi:V(G)\to\mathbb{R}$, the Laplacian operator is defined by
$$
(L_G\phi)(x)=\sum_{y\in N_G(x)}\bigl(\phi(x)-\phi(y)\bigr)
$$
for every $x\in V(G)$.
\end{definition}

\begin{definition}\label{definition:2.4}
Let $G$ be a connected graph and let $u,v\in V(G)$ be distinct. A voltage function associated with a unit current entering at $u$ and leaving at $v$ is a solution of $L_G\phi=e_u-e_v$, where $e_u$ is the unit vector at $u$. Such a solution is unique up to an additive constant.
\end{definition}

After the normalization $\phi(v)=0$, the value $\phi(u)$ equals the effective resistance $r_G(u,v)$, and
$$
\mathcal{E}_G(\phi)=\phi(u)-\phi(v)=r_G(u,v).
$$

Finally, we record the following maximum principle for harmonic functions on graphs.

\begin{proposition}\label{pro2.5}
Let $G$ be a connected graph, and let $\phi:V(G)\to\mathbb R$ be harmonic on $V(G)\setminus\{u,v\}$, where $u,v\in V(G)$ are distinct. Then
$$
\min\{\phi(u),\phi(v)\}\leq\phi(x)\leq\max\{\phi(u),\phi(v)\}\qquad\text{for every }x\in V(G).
$$
\end{proposition}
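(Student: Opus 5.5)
The argument is a discrete maximum principle. Fix a vertex $x\notin\{u,v\}$ and let $M=\max\{\phi(u),\phi(v)\}$. We show that $\phi(x)>M$ is impossible; the lower bound then follows by applying the same argument to $-\phi$, which is harmonic on the same set.

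Let $M^*=\max_{y\in V(G)}\phi(y)$, which exists because $V(G)$ is finite. Suppose, for contradiction, that $M^*>M$. Let $S=\{y\in V(G):\phi(y)=M^*\}$. Then $S$ is nonempty and disjoint from $\{u,v\}$, so $\phi$ is harmonic at every vertex of $S$.

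\textbf{Key step: maximizers propagate to neighbours.} Take $y\in S$. Since $G$ is connected and $n\ge 2$, we have $d_G(y)\ge1$. Harmonicity at $y$ gives
$$
M^*=\phi(y)=\frac{1}{d_G(y)}\sum_{z\in N_G(y)}\phi(z).
$$
Each term satisfies $\phi(z)\le M^*$, and their average equals $M^*$. Hence $\phi(z)=M^*$ for every $z\in N_G(y)$, that is, $N_G(y)\subseteq S$.

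\textbf{Reaching $u$.} Because $G$ is connected, there is a path $y=y_0,y_1,\dots,y_k=u$. Induct along the path, applying the key step at each $y_i$. Each $y_i$ lies in $S$, and $S\cap\{u,v\}=\emptyset$, so $y_i\neq u,v$ and the harmonic hypothesis can be used at $y_i$. This forces $y_{i+1}\in S$, and eventually $u\in S$. But then $\phi(u)=M^*>M\ge\phi(u)$, a contradiction.

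Therefore $\phi(x)\le M^*=M$. The symmetric argument with $-\phi$ gives $\phi(x)\ge\min\{\phi(u),\phi(v)\}$, and for $x\in\{u,v\}$ both bounds hold trivially.

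The only subtle point is checking that the propagation never needs harmonicity at $u$ or $v$. This holds because the induction only applies the key step at vertices already known to be in $S$, and $S$ excludes $u$ and $v$. There is no real obstacle beyond that.
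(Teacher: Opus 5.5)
Your proof is correct and complete. Maximizers propagate to neighbours, and connectedness carries the maximum along a path to $u$; harmonicity is only ever used at vertices already known to lie in $S$, hence outside $\{u,v\}$. Together with the reflection $\phi\mapsto-\phi$, this is exactly the standard discrete maximum principle. The paper gives no proof of Proposition~\ref{pro2.5}; it simply records it as a standard fact before applying it to the voltage function, so your argument is precisely the one it takes for granted.
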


\section{Bounds on Kemeny’s constant for graphs of diameter two}
In this section, we first establish an energy estimate that will be used to bound the effective resistance in graphs of diameter two. The following lemma gives a lower bound for the Dirichlet energy of a function whose values are separated on two vertex sets with pairwise distance at most two.

\begin{lemma}\label{lem:capacity}
Let $G$ be a connected graph, where $A, B \subseteq V(G)$ are
nonempty disjoint vertex subsets with $|A| = p$ and $|B| = q$. Assume that $d_G(a,b)\le 2$ for every  $a\in A$ and $b\in B$.
Let $w:V(G)\to\mathbb R$ be an arbitrary real-valued function on the vertices such that $ w(a)\ge \alpha$ and $ w(b)\le \beta$ for some real numbers $\alpha>\beta$.  Then the Dirichlet energy of $w$ satisfies $\mathcal{E}_G({w})\ge \frac{pq}{p+q}(\alpha-\beta)^2$.

\end{lemma}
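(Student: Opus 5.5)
The plan is to reduce the statement to an effective-resistance estimate between two ``super-vertices'' and then bound that resistance by Thomson's principle, using an explicit unit flow routed along the paths of length at most two that the hypothesis provides.

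\emph{Step 1 (normalising $w$).} Replace $w$ by the clipped function $\tilde w=\min\{\alpha,\max\{\beta,w\}\}$. Clipping to an interval is a contraction: $|\tilde w(x)-\tilde w(y)|\le|w(x)-w(y)|$ for every edge $xy$, so $\mathcal E_G(\tilde w)\le\mathcal E_G(w)$. Moreover $\tilde w\equiv\alpha$ on $A$ and $\tilde w\equiv\beta$ on $B$. Contract $A$ to a single vertex $s$ and $B$ to a single vertex $t$, keeping parallel edges and deleting the edges inside $A$ and inside $B$, to obtain a multigraph $H$. Since $\tilde w$ is constant on $A$ and on $B$, it descends to $H$, and $\mathcal E_H(\tilde w)=\mathcal E_G(\tilde w)$. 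By the variational (Dirichlet) characterisation of effective resistance,
$$
\mathcal E_H(\tilde w)\ \ge\ \frac{(\alpha-\beta)^2}{r_H(s,t)}.
$$
It therefore suffices to show $r_H(s,t)\le\frac{p+q}{pq}$.

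\emph{Step 2 (the flow).} By Thomson's principle, $r_H(s,t)\le\sum_e f_e^2$ for every unit $s$--$t$ flow $f$. For each pair $(a,b)\in A\times B$, route $1/(pq)$ units of flow from $a$ to $b$:
\begin{itemize}
\item along the edge $ab$ if it exists;
\item otherwise along a path $a\,x\,b$, which exists because $d_G(a,b)\le2$.
\end{itemize}
If the midpoint $x$ lies in $A$ or in $B$, then after contraction the path collapses to a single $s$--$t$ edge ($xb$ or $ax$, respectively). The superposition of these routes is a unit $s$--$t$ flow in $H$.

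\emph{Step 3 (energy of the flow).} There are three kinds of edges.
\begin{itemize}
\item An edge $ax$ with $x\notin A\cup B$ carries flow only from the $q$ pairs $(a,\cdot)$, so $f_{ax}\le 1/p$. The total flow on such edges is at most $1$, so
$$
\sum f_e^2\le \max_e f_e\cdot\sum f_e\le \frac1p .
$$
\item Symmetrically, the edges $xb$ with $x\notin A\cup B$ contribute at most $1/q$.
\item The remaining edges are the direct $A$--$B$ edges (the $s$--$t$ edges of $H$).
\end{itemize}
The first two kinds already give $\frac1p+\frac1q=\frac{p+q}{pq}$, which is exactly the target.

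\emph{The main obstacle} is the accounting for the $A$--$B$ edges. The difficulty is that an edge $a'b$ may be used directly by the pair $(a',b)$, by pairs $(a,b)$ routed through the midpoint $a'$, and by pairs $(a',b'')$ routed through the midpoint $b$. The plan is to modify the routing so that this mixed usage does not arise. Whenever a pair $(a,b)$ has its midpoint in $A\cup B$, it should instead share the flow of an $s$--$t$ edge it is incident to in $H$. Equivalently, compare the direct $s$--$t$ edges with the two-step paths through outside vertices. The key point is that an $s$--$t$ edge incident to $a$ carries at most $1/p$ of flow, and one incident to $b$ at most $1/q$, while using only a single edge instead of two. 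I would therefore charge the flow through $s$--$t$ edges against the same budgets $1/p$ (by grouping the flow according to the $A$-endpoint) and $1/q$ (by grouping it according to the $B$-endpoint), splitting each such edge's flow into an ``$A$-half'' and a ``$B$-half''. If a clean charging is awkward, I would fall back on a per-pair Cauchy--Schwarz bound with weights $1/q$ on edges at $A$ and $1/p$ on edges at $B$:
$$
(s+t)^2\le(p+q)\Bigl(\frac{s^2}{q}+\frac{t^2}{p}\Bigr).
$$
Summed over the $pq$ pairs, this gives the same bound, provided each edge's total coefficient is checked to be at most $1$. That check is precisely where the direct-edge case must be handled carefully.
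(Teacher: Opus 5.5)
Steps 1 and 2 are sound. Clipping, contracting $A$ and $B$ to $s$ and $t$, the Dirichlet principle and Thomson's principle correctly reduce the lemma to $r_H(s,t)\le\frac{p+q}{pq}$. This flow-side route is a legitimate dual of the paper's argument, which works on the potential side with $g=(w-\beta)/(\alpha-\beta)$: for each pair it uses either $1\le(g(x)-g(y))^2$ on a single $A$--$B$ edge or the weighted Cauchy--Schwarz bound on a two-edge path, then checks that each edge's total coefficient is at most $p+q$.

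\textbf{The gap.} You never bound the contribution of the $s$--$t$ edges. Worse, Step 3 has already spent the whole budget $\frac1p+\frac1q$ on the other two edge classes, because you bounded the total flow through each class by $1$. The repairs you sketch do not work as stated. An $s$--$t$ edge $a'b'$ receives $\frac1{pq}$ from each of the following:
\begin{itemize}
\item the direct pair $(a',b')$;
\item up to $p-1$ pairs $(a,b')$ routed through the midpoint $a'$;
\item up to $q-1$ pairs $(a',b)$ routed through the midpoint $b'$.
\end{itemize}
Its load can therefore reach $\frac{p+q-1}{pq}$, which exceeds both $\frac1p$ and $\frac1q$ as soon as $p,q\ge2$. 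Rerouting cannot prevent this mixed usage. Take the $5$-cycle with edges $a_1b_1,\,b_1b_2,\,b_2x,\,xa_2,\,a_2a_1$, and $A=\{a_1,a_2\}$, $B=\{b_1,b_2\}$. The pairs $(a_2,b_1)$ and $(a_1,b_2)$ each have a unique path of length at most two, and both pass through $a_1b_1$, which therefore carries $\frac34>\frac12$. Your fallback Cauchy--Schwarz is exactly the paper's proof, but the one verification it needs is the one you leave open.

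\textbf{The fix.} Do the bookkeeping per pair, not per edge class. A pair routed through a single $s$--$t$ edge puts no flow on the $A$-to-outside or outside-to-$B$ edges. So if $N_2$ pairs use an outside midpoint, those two classes carry total flow $N_2/(pq)$ each, not $1$. Write $P_{ab}$ for the route of $(a,b)$, and note that all route flows on a given edge of $H$ point the same way. Then
\[
\sum_e f_e^2=\sum_e f_e\sum_{(a,b):\,e\in P_{ab}}\frac1{pq}=\frac1{pq}\sum_{(a,b)\in A\times B}\ \sum_{e\in P_{ab}}f_e .
\]
For a route $a\,x\,b$ with $x\notin A\cup B$, your own bounds give $f_{ax}+f_{xb}\le\frac1p+\frac1q$. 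For a one-edge route, the count above gives $f_e\le\frac{p+q-1}{pq}<\frac1p+\frac1q$. Summing over the $pq$ pairs yields $r_H(s,t)\le\frac{p+q}{pq}$, which completes your argument. The count ``at most $p+q-1$ pairs use a given $A$--$B$ edge'' is precisely the check the paper performs on the potential side. Once repaired, your proof is the exact dual of the paper's, with Thomson's principle in place of the per-pair Cauchy--Schwarz.
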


\begin{proof}
Set $g=(w-\beta)/(\alpha-\beta)$. Since $w\geq\alpha$ on $A$ and $w\leq\beta$ on $B$, we have $g\geq1$ on $A$ and $g\leq0$ on $B$. Moreover, for every edge $xy\in E(G)$,
$$
g(x)-g(y)=\frac{w(x)-\beta}{\alpha-\beta}-\frac{w(y)-\beta}{\alpha-\beta}=\frac{w(x)-w(y)}{\alpha-\beta}.
$$
It follows that $\mathcal{E}_G(g)=\mathcal{E}_G(w)/(\alpha-\beta)^2$. Therefore, after replacing $w$ by $g$, we may assume that $\alpha=1$ and $\beta=0$.
For each ordered pair $(a,b)\in A\times B$, we define a path $Q(a,b)$ of length at most two as follows. 

\noindent\textbf{Case 1.} Suppose that $a$ and $b$ are adjacent. Then we define $Q(a,b)$ to be the edge $ab$.

\noindent\textbf{Case 2.} Suppose that $a$ and $b$ are not adjacent. Since $\operatorname{diam}(G)\leq 2$, the vertices $a$ and $b$ have a common neighbor. Choose one such common neighbor $z$. We define $Q(a,b)$ according to the following three subcases:
\begin{itemize}
\item if $z\notin A\cup B$, then $Q(a,b)$ is the two-edge path $a$--$z$--$b$;
\item if $z\in A$, then $Q(a,b)$ is the edge $zb$;
\item if $z\in B$, then $Q(a,b)$ is the edge $az$.
\end{itemize}

In all cases, $Q(a,b)$ is either an edge joining a vertex of $A$ to a vertex of $B$, or a path of length two whose internal vertex lies outside $A \cup B$.
For a given $Q(a,b)$, we derive a lower bound on its energy contribution.

First suppose that $Q(a,b)$ is an $A$--$B$ edge $xy$, where $x\in A$ and
$y\in B$.  Since $g(x)\ge 1$ and $g(y)\le 0$, we have
\begin{equation}\label{eq:edge-choice}
        1\le \bigl(g(x)-g(y)\bigr)^2.
\end{equation}

Next suppose that $Q(a,b)$ is a two-edge path $a$-$z$-$b$ with
$z\notin A\cup B$.  Then
\[
        1\le g(a)-g(b)
        \le |g(a)-g(z)|+|g(z)-g(b)|.
\]

Applying the weighted Cauchy-Schwarz inequality
\[
        (X+Y)^2\le (p+q)\left(\frac{X^2}{q}+\frac{Y^2}{p}\right),
\]
with $ X = |g(a) - g(z)| $ and $ Y = |g(z) - g(b)| $, we get
\begin{equation}\label{eq:path-choice}
        1\le (p+q)
        \left(
        \frac{\bigl(g(a)-g(z)\bigr)^2}{q}
        +
        \frac{\bigl(g(z)-g(b)\bigr)^2}{p}
        \right).
\end{equation}

We now sum \eqref{eq:edge-choice} and \eqref{eq:path-choice} over all $pq$ ordered pairs $(a,b)\in A\times B$, and estimate the contribution of each edge according to the following three cases.

\noindent\textbf{Case 1.} Edges joining $A$ and $B$.

Let $ab$ be an edge with $a\in A$ and $b\in B$. It can be selected as $Q(a,b)$ in the direct case for at most one ordered pair. It can also be
selected in the case $z\in A$ only for ordered pairs whose second coordinate is its endpoint in $B$, hence for at most $p-1$ additional ordered pairs.  Similarly,
it can be selected in the case $z\in B$ for at most $q-1$ additional ordered pairs.  Hence any fixed $A$--$B$ edge is selected at most $p+q-1\le p+q$ times.
Therefore the  total contribution of all such edges is at most
\[
        (p+q)\sum_{\substack{xy\in E(G)\\ x\in A,\ y\in B}}
        \bigl(g(x)-g(y)\bigr)^2.
\]

\noindent\textbf{Case 2.} Edges from $A$ to outside $A\cup B$.

Let $az$ be an edge with $a\in A$ and $z\notin A\cup B$.  Such an edge can
occur as the first edge of $Q(a,b)$ only for ordered pairs with first coordinate
$a$.  There are at most $q$ such pairs.  Since each occurrence is weighted by
$(p+q)/q$ in \eqref{eq:path-choice}, the total contribution of this fixed edge
is at most
\[
        q\cdot \frac{p+q}{q}\bigl(g(a)-g(z)\bigr)^2
        =(p+q)\bigl(g(a)-g(z)\bigr)^2.
\]

\noindent\textbf{Case 3.} Edges from $A\cup B$ to outside $B$.

Let $zb$ be an edge with $z\notin A\cup B$ and $b\in B$. In the two-edge path $Q(a,b)$ = $a$-$z$-$b$, this edge must serve as the second edge. It can be selected only for ordered pairs whose second coordinate is $b$, hence at most $p$ times; since each occurrence is weighted by
$(p+q)/p$, its total contribution is at most
\[
        (p+q)\bigl(g(z)-g(b)\bigr)^2.
\]

Combining these estimates gives
\[
pq \leq (p+q)\sum_{xy\in E(G)}\big(g(x) - g(y)\big)^2 = (p+q)\,\mathcal{E}_{G}(g).
\]
Thus
\[
\mathcal{E}_{G}(g)\geq \frac{pq}{p + q}.
\]
Multiplying by \((\alpha - \beta)^{2}\) and returning to the original function \(w\) yields the claimed inequality
\[
\mathcal{E}_{G}(w)\geq \frac{pq}{p + q}(\alpha - \beta)^{2}.
\]
This completes the proof.
\end{proof}

\section{Bounds on Kemeny's constant for graphs of diameter two}
 
In the following, we  establish an upper bound for the effective resistance between any two vertices of a connected graph of diameter two in terms of their degrees. We then apply this resistance estimate to derive a linear upper bound for Kemeny's constant and related  bound for the Kirchhoff index of connected graphs of diameter two.
\begin{theorem}\label{thm:diam2-resistance}
Let $G$ be a connected graph with $\diam(G)=2$.  Then for every two
distinct vertices $u,v\in V(G)$,
\[
        r_G(u,v)\le (3+\sqrt5)\left(\frac1{d_G(u)}+\frac1{d_G(v)}\right).
\]
\end{theorem}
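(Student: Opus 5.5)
The plan is to combine the voltage function of Definition~\ref{definition:2.4} with Lemma~\ref{lem:capacity}, applied to two sets: $A$ is the part of the neighbourhood of $u$ where the voltage is still close to its top value, and $B$ is the part of the neighbourhood of $v$ where it is close to $0$. Write $d_u=d_G(u)$, $d_v=d_G(v)$, $D=\frac1{d_u}+\frac1{d_v}$ and $R=r_G(u,v)$.

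\textbf{Step 1 (the voltage).} Let $\phi$ solve $L_G\phi=e_u-e_v$ with $\phi(v)=0$. Then $\phi(u)=R$ and $\mathcal{E}_G(\phi)=R$. By Proposition~\ref{pro2.5}, $0\le\phi\le R$ everywhere.

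\textbf{Step 2 (large sets near the endpoints).} Evaluating $(L_G\phi)(u)=1$ gives
\[
\sum_{y\in N_G(u)}\bigl(R-\phi(y)\bigr)=1,
\]
with every summand nonnegative. By a Markov-type count, fewer than $d_u/2$ neighbours satisfy $R-\phi(y)>2/d_u$. Hence
\[
A=\{x\in N_G(u)\cup\{u\}:\ \phi(x)\ge R-2/d_u\}
\]
has $p=|A|\ge d_u/2$. Symmetrically, $(L_G\phi)(v)=-1$ gives $\sum_{y\in N_G(v)}\phi(y)=1$, so
\[
B=\{x\in N_G(v)\cup\{v\}:\ \phi(x)\le 2/d_v\}
\]
has $q=|B|\ge d_v/2$.

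\textbf{Step 3 (the degenerate case).} Suppose $R-2/d_u\le 2/d_v$. Then $R\le 2D$ and the theorem holds trivially. Otherwise $\alpha:=R-2/d_u>\beta:=2/d_v$. In that case $A$ and $B$ are disjoint, since no vertex can satisfy both defining inequalities.

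\textbf{Step 4 (apply the lemma).} Since $\operatorname{diam}(G)=2$, every pair $a\in A$, $b\in B$ satisfies $d_G(a,b)\le2$. Lemma~\ref{lem:capacity} therefore gives
\[
R=\mathcal{E}_G(\phi)\ge\frac{pq}{p+q}\,(R-2D)^2.
\]
We also have
\[
\frac{pq}{p+q}=\Bigl(\frac1p+\frac1q\Bigr)^{-1}\ge\Bigl(\frac2{d_u}+\frac2{d_v}\Bigr)^{-1}=\frac1{2D}.
\]
Setting $R=xD$ and combining the two displays yields $x\ge\frac12(x-2)^2$. This is equivalent to $x^2-6x+4\le0$, so $x\le 3+\sqrt5$, which is exactly the claimed bound.

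\textbf{Main obstacle.} The real work is choosing the thresholds so that the constant comes out as $3+\sqrt5$. With a general threshold $s/d_u$ the count gives $p\ge(1-1/s)d_u$. This leads to the inequality $x\ge(1-1/s)(x-s)^2$, whose optimization in $s$ lands near $s=2$. The remaining points to check are routine but need care:
\begin{itemize}
\item the strict versus non-strict Markov count, which must still give $p\ge d_u/2$;
\item that including $u$ in $A$ and $v$ in $B$ is harmless;
\item the disjointness argument of Step 3.
\end{itemize}
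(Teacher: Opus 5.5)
Your proposal is correct and follows the paper's proof essentially step for step. It uses the same voltage normalization, the same Markov-type thresholds $R-2/d_G(u)$ and $2/d_G(v)$, the same trivial case $R\le 2D$, and the same application of Lemma~\ref{lem:capacity} with $pq/(p+q)\ge 1/(2D)$, leading to $x^2-6x+4\le 0$. The only difference is that you also put $u$ into $A$ and $v$ into $B$, which is harmless (it only enlarges $p$ and $q$) but unnecessary, since the paper works with $S\subseteq N_G(u)$ and $T\subseteq N_G(v)$.
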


\begin{proof}
Fix two distinct vertices $u,v\in V(G)$, and set $R:=r_G(u, v)$. Let $\phi:V(G)\to\mathbb R$ be the voltage function induced by injecting one unit of current at
$u$ and extracting it at $v$, normalized by $\phi(v)=0$.  Then $\phi$ satisfies
\[
L_G\phi = e_u - e_v, \qquad \phi(u)=R.
\]
By Definition~\ref{def2.1}, the Dirichlet energy is the quadratic form $\mathcal{E}_G(\phi) = \phi^\top L_G \phi$. Left-multiplying the voltage equation by $\phi^\top$ yields

\begin{equation}\label{eq:energy-is-R}
\mathcal{E}_G(\phi)=\phi^\top L_G\phi = \phi^\top(e_u-e_v)=\phi(u)-\phi(v)=R.
\end{equation}
Furthermore, by Proposition~\ref{pro2.5}, since $\phi$ is harmonic on $V(G)\backslash\{u,v\}$ with boundary values $R$ at $u$ and $0$ at $v$, we have

\begin{equation}\label{eq:max-principle}
        0\le \phi(x)\le R
        \quad\text{for every }x\in V(G).
\end{equation}

At vertex $u$, we have
\[
        1=(L_G\phi)(u)=\sum_{x\in N_G(u)}\bigl(\phi(u)-\phi(x)\bigr)
        =\sum_{x\in N_G(u)}\bigl(R-\phi(x)\bigr).
\]
Each summand is nonnegative by \eqref{eq:max-principle}.  Therefore at least
$d_G(u)/2$ vertices $x\in N_G(u)$ satisfy
\[
        R-\phi(x)\le \frac2{d_G(u)}.
\]
Define
\[
        S=\left\{x\in N_G(u):\phi(x)\ge R-\frac2{d_G(u)}\right\}.
\]
Then
\begin{equation}\label{eq:S-size}
        |S|\ge \frac{d_G(u)}{2}.
\end{equation}

Similarly, at vertex $v$,
\[
        -1=(L_G\phi)(v)=\sum_{y\in N_G(v)}\bigl(\phi(v)-\phi(y)\bigr)
        =-\sum_{y\in N_G(v)}\phi(y),
\]
so
\[
        1=\sum_{y\in N_G(v)}\phi(y).
\]
Again all summands are nonnegative by \eqref{eq:max-principle}.  Hence at least
${d_G(v)}/2$ vertices $y\in N_G(v)$ satisfy
\[
        \phi(y)\le \frac2{d_G(v)}.
\]
Define
\[
        T=\left\{y\in N_G(v):\phi(y)\le \frac2{d_G(v)}\right\}.
\]
Then
\begin{equation}\label{eq:T-size}
        |T|\ge \frac{d_G(v)}{2}.
\end{equation}

Let $s:=1/d_G(u)+1/d_G(v)$. If $R\le 2s$, then the desired inequality follows immediately because $2<3+\sqrt5$. Hence we may assume
 \begin{equation}\label{eq:R-larger-than-2s}
        R>2s.
\end{equation}
 Under this assumption, we know that $S\cap T=\varnothing$; otherwise, if $z\in S\cap T$, then
\[
        R-\frac2{d_G(u)}\le \phi(z)\le \frac2{d_G(v)},
\]
which implies $R\le 2/d_G(u)+2/d_G(v)=2s$, contradicting the assumption \eqref{eq:R-larger-than-2s}.

For every $x\in S$ and $y\in T$, we have
\[
        \phi(x)-\phi(y)
        \ge R-\frac2{d_G(u)}-\frac2{d_G(v)}
        = R-2s.
\]
Set $\gamma:=R-2s>0$. Since $\diam(G)=2$, every $x\in S$ and $y\in T$ satisfy $d_G(x,y)\le 2$.
Applying Lemma~\ref{lem:capacity} to $A=S$, $B=T$, $w=\phi$,
$\alpha=R-2/d_G(u)$, and $\beta=2/d_G(v)$, we obtain
\begin{equation}\label{eq:energy-lower-ST}
        \E_G(\phi)\ge \frac{|S||T|}{|S|+|T|}\gamma^2.
\end{equation}
The function $(p,q)\mapsto pq/(p+q)$ is increasing in each variable for
$p,q>0$.  Hence \eqref{eq:S-size} and \eqref{eq:T-size} imply
\begin{equation}\label{eq:harmonic-mean-lower}
        \frac{|S||T|}{|S|+|T|}
        \ge
        \frac{(d_G(u)/2)(d_G(v)/2)}{d_G(u)/2+d_G(v)/2}
        =\frac{d_G(u)d_G(v)}{2(d_G(u)+d_G(v))}
        =\frac1{2s}.
\end{equation}
Combining \eqref{eq:energy-is-R}, \eqref{eq:energy-lower-ST}, and
\eqref{eq:harmonic-mean-lower} gives
\begin{equation}\label{eq:R-2s}
        R\ge \frac{(R-2s)^2}{2s}.
\end{equation}
Let $X:=R/s$. Since $R>2s$, it follows that $X>2$, and inequality \eqref{eq:R-2s} becomes
\[
        X\ge \frac{(X-2)^2}{2},
\]
which is equivalent to $X^2-6X+4\le 0$. The roots of this quadratic  are $3-\sqrt5$ and $3+\sqrt5$;  together with the condition $X>2$, we conclude
that $X\le 3+\sqrt5$. Therefore
\[
        R\le (3+\sqrt5)s
        =(3+\sqrt5)\left(\frac1{d_G(u)}+\frac1{d_G(v)}\right),
\]
as claimed. This completes the proof.
\end{proof}

\begin{corollary}\label{cor:Kemeny-diam2}
Let $G$ be a connected  graph on $n$ vertices with $m$ edges and $\diam(G)=2$.
Then
\[
        \Kemeny(G)\le (3+\sqrt5)(n-1).
\]
In particular,
\[
        \Kemeny(G)=O(n).
\]
\end{corollary}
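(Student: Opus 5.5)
We will plug the resistance bound of Theorem~\ref{thm:diam2-resistance} into the Palacios--Renom identity \eqref{eq1.2}. This gives
\[
\Kemeny(G)=\frac{1}{4m}\sum_{u\in V(G)}\sum_{v\in V(G)} d_G(u)d_G(v)\,r_G(u,v).
\]
Diagonal terms vanish since $r_G(u,u)=0$, so only ordered pairs $u\neq v$ contribute. For each such pair, Theorem~\ref{thm:diam2-resistance} yields
\[
d_G(u)d_G(v)\,r_G(u,v)\le (3+\sqrt5)\,\bigl(d_G(v)+d_G(u)\bigr).
\]
The product of degrees exactly cancels the reciprocal degrees, and this cancellation is the whole point of stating the resistance bound in that form.

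Next we sum over ordered pairs of distinct vertices. For a fixed $u$, we have $\sum_{v\neq u}\bigl(d_G(u)+d_G(v)\bigr)=(n-1)d_G(u)+(2m-d_G(u))$. Summing over $u$ gives $(n-1)2m+2m\cdot n-2m=2m\cdot 2(n-1)=4m(n-1)$. Equivalently, each vertex degree $d_G(w)$ appears $2(n-1)$ times in the double sum: $n-1$ times as the first coordinate and $n-1$ times as the second. Hence the double sum is at most $(3+\sqrt5)\cdot 4m(n-1)$. Dividing by $4m$ gives $\Kemeny(G)\le(3+\sqrt5)(n-1)$, and the $O(n)$ statement follows immediately.

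No step here is a genuine obstacle once Theorem~\ref{thm:diam2-resistance} is available. The only care needed is in the counting: the identity \eqref{eq1.2} runs over ordered pairs and carries the factor $\frac1{4m}$. We must therefore check that the degree sum over ordered distinct pairs equals $4m(n-1)$ exactly and not $2m(n-1)$, so that the constant comes out as $3+\sqrt5$ rather than twice or half of it. We also need $m\ge1$, which holds because $G$ is connected with diameter two and so $n\ge3$.
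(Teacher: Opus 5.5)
Your proposal is correct and follows the paper's proof exactly: you substitute the resistance bound of Theorem~\ref{thm:diam2-resistance} into the Palacios--Renom identity so that the degree products cancel the reciprocal degrees. You then verify that the degree sum over ordered distinct pairs equals $4m(n-1)$, which yields $\Kemeny(G)\le(3+\sqrt5)(n-1)$.
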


\begin{proof}
Using Eq. \eqref{eq1.2} and Theorem~\ref{thm:diam2-resistance},
\begin{align*}
\Kemeny(G)
&=\frac{1}{4m}\sum_{x \in V(G)} \sum_{y \in V(G)} d_G(x)d_G(y) r_G(x,y) \\
&=\frac{1}{4m}\sum_{\substack{x,y\in V(G)\\x\ne y}}d_G(x)d_G(y) r_G(x,y) \\
&\le \frac{3+\sqrt5}{4m}
\sum_{\substack{x,y\in V(G)\\x\ne y}}
        d_G(x)d_G(y)\left(\frac1{d_G(x)}+\frac1{d_G(y)}\right) \\
&=\frac{3+\sqrt5}{4m}
\sum_{\substack{x,y\in V(G)\\x\ne y}}(d_G(x)+d_G(y)).
\end{align*}
Now
\[
        \sum_{\substack{x,y\in V(G)\\x\ne y}}d_G(x)
        =(n-1)\sum_{x\in V(G)}d_G(x)
        =2m(n-1),
\]
and the same identity holds with $d_G(y)$ in place of $d_G(x)$.  Hence
\[
        \sum_{\substack{x,y\in V(G)\\x\ne y}}(d_G(x)+d_G(y))
        =4m(n-1).
\]
Therefore
\[
        \Kemeny(G)\le (3+\sqrt5)(n-1).
\]
\end{proof}

We next consider two resistance-based invariants that are closely related to Kemeny's constant, namely the Kirchhoff index and the multiplicative degree-Kirchhoff index. The Kirchhoff index $Kf(G)$  of $G$ is defined as
\[
Kf(G)=\frac{1}{2} \sum_{u \in V(G)} \sum_{v \in V(G)} r_G(u,v),
\]
and the multiplicative degree-Kirchhoff index  $Kf^{*}(G)$  of $G$ is defined as
\[
Kf^{*}(G)=\frac{1}{2} \sum_{u \in V(G)} \sum_{v \in V(G)}d_G(u)d_G(v)r_G(u,v).
\]

\begin{corollary}\label{cor:degree-kirchhoff}
Let $G$ be a connected  graph on $n$ vertices with $m$ edges and $\diam(G)=2$. Then
\[
        Kf^{*}(G)\le 2(3+\sqrt5)m(n-1).
\]
\end{corollary}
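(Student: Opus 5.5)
The plan is to repeat the computation in the proof of Corollary~\ref{cor:Kemeny-diam2}, because $Kf^{*}(G)$ and $\Kemeny(G)$ differ only by a normalizing factor. Comparing the definition of $Kf^{*}(G)$ with the Palacios--Renom identity \eqref{eq1.2}, we get
\[
Kf^{*}(G)=\frac12\sum_{u\in V(G)}\sum_{v\in V(G)}d_G(u)d_G(v)r_G(u,v)=2m\,\Kemeny(G).
\]
Applying Corollary~\ref{cor:Kemeny-diam2} then gives at once
\[
Kf^{*}(G)\le 2m(3+\sqrt5)(n-1),
\]
which is the claimed bound.

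For a self-contained argument that avoids the identity, I would instead start from the definition. The diagonal terms vanish because $r_G(u,u)=0$. For each unordered pair of distinct vertices, I would apply Theorem~\ref{thm:diam2-resistance} and use
\[
d_G(u)d_G(v)\left(\frac1{d_G(u)}+\frac1{d_G(v)}\right)=d_G(u)+d_G(v).
\]
Summing over ordered pairs $u\neq v$ gives $\sum_{u\ne v}(d_G(u)+d_G(v))=4m(n-1)$, as already computed in the proof of Corollary~\ref{cor:Kemeny-diam2}. The bound is therefore $\frac12(3+\sqrt5)\cdot 4m(n-1)=2(3+\sqrt5)m(n-1)$.

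There is no genuine obstacle here. The only step needing care is the constant: the definition of $Kf^{*}$ carries the factor $\frac12$ while \eqref{eq1.2} carries $\frac1{4m}$, and this ratio is exactly what produces the factor $2m$ in front of the Kemeny bound.
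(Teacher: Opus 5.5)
Your proposal is correct and matches the paper's proof: the paper also just combines the identity $Kf^{*}(G)=2m\Kemeny(G)$, obtained by comparing the definition with \eqref{eq1.2}, with Corollary~\ref{cor:Kemeny-diam2}. Your self-contained alternative is also valid; it simply redoes the computation from that corollary's proof with the prefactor $\frac12$ in place of $\frac{1}{4m}$.
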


\begin{proof}
This is just the identity $Kf^{*}(G)=2m\Kemeny(G)$ together with
Corollary~\ref{cor:Kemeny-diam2}.
\end{proof}

Palacios~\cite{palacios2015} proved that every connected graph $G$ of order $n$ and diameter two satisfies $Kf(G)\leq(n-1)^2$, with equality if and only if $G\cong S_n$, where $S_n$ denotes the star on $n$ vertices. As a further consequence of our effective-resistance estimate, we obtain the following upper bound in terms of the vertex degrees.

\begin{corollary}\label{cor:Kirchhoff-diam2}
Let $G$ be a connected graph of order $n$ with $\diam(G)=2$. Then
\[
{Kf}(G)\leq (3+\sqrt{5})(n-1)\sum_{x\in V(G)}\frac{1}{d_G(x)}
\leq (3+\sqrt{5})\frac{n(n-1)}{\delta(G)}.
\]
In particular, $Kf(G)=O(n)$ for every family of such graphs satisfying $\delta(G)=\Omega(n)$.
\end{corollary}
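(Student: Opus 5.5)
The plan is to sum the pairwise resistance bound of Theorem~\ref{thm:diam2-resistance} directly, in the same way as in the proof of Corollary~\ref{cor:Kemeny-diam2}, but without the degree weights. Write
\[
Kf(G)=\frac12\sum_{\substack{x,y\in V(G)\\x\ne y}} r_G(x,y)\le \frac{3+\sqrt5}{2}\sum_{\substack{x,y\in V(G)\\x\ne y}}\left(\frac1{d_G(x)}+\frac1{d_G(y)}\right),
\]
where the diagonal terms vanish because $r_G(x,x)=0$.

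For a fixed $x$, the term $1/d_G(x)$ appears once for each of the $n-1$ choices of $y\ne x$. The same count holds for $1/d_G(y)$ by symmetry. So the double sum equals
\[
2(n-1)\sum_{x\in V(G)}\frac{1}{d_G(x)},
\]
and this gives the first inequality $Kf(G)\le(3+\sqrt5)(n-1)\sum_x 1/d_G(x)$.

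For the second inequality, use $d_G(x)\ge\delta(G)$ for every $x$. Then $\sum_x 1/d_G(x)\le n/\delta(G)$, which yields $(3+\sqrt5)\,n(n-1)/\delta(G)$.

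For the ``in particular'' statement, suppose $\delta(G)\ge cn$ for some constant $c>0$. Then the bound is at most $(3+\sqrt5)(n-1)/c$, which is $O(n)$.

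None of these steps is a real obstacle: the whole argument reduces to the resistance estimate already proved and a simple counting of terms. The only point needing care is the factor $\tfrac12$ together with the symmetric double counting over ordered pairs, so that the constant comes out exactly as $3+\sqrt5$.
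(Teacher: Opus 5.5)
Your proposal is correct and matches the paper's proof: both sum the pointwise bound of Theorem~\ref{thm:diam2-resistance} over pairs of vertices, note that each $1/d_G(x)$ is counted exactly $n-1$ times, and then use $d_G(x)\ge\delta(G)$. The only cosmetic difference is that you sum over ordered pairs with the factor $\tfrac12$, whereas the paper sums over unordered pairs directly.
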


\begin{proof}
By Theorem~\ref{thm:diam2-resistance}, we have
\begin{align*}
{Kf}(G)
&=\sum_{\{u,v\}\in\binom{V(G)}{2}}r_G(u,v)\\
&\leq (3+\sqrt{5})\sum_{\{u,v\}\in\binom{V(G)}{2}}
\left(\frac{1}{d_G(u)}+\frac{1}{d_G(v)}\right)\\
&=(3+\sqrt{5})(n-1)\sum_{u\in V(G)}\frac{1}{d_G(u)}\\
&\leq (3+\sqrt{5})\frac{n(n-1)}{\delta(G)}.
\end{align*}
The final assertion follows immediately when $\delta(G)=\Omega(n)$.
\end{proof}

\section{Nordhaus--Gaddum  bounds for Kemeny's constant}

In this section, we resolve Conjecture~\ref{conjecture2}. We first record two elementary diameter estimates. We then establish the minimum bound by combining a new estimate for graphs with a dominating pair with the spectral-gap theorem of Kim and Madras~\cite{KimMadras2025}, and finally prove the product bound.

\begin{lemma}\label{lem:general-diameter-bound}
Let $G$ be a connected graph with $m$ edges and diameter $d$. Then $\Kemeny(G)\leq md$.
In particular, every connected graph $G$ of order $n$ satisfies $\Kemeny(G)\leq\binom{n}{2}(n-1)=\frac12n(n-1)^2$.
\end{lemma}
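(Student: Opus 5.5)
We bound the resistance distance by the graph distance and then apply the Palacios--Renom identity \eqref{eq1.2}. The elementary fact $r_G(x,y)\le d_G(x,y)$ holds by Rayleigh monotonicity: deleting every edge except those of a shortest $x$--$y$ path can only increase the effective resistance. What remains is a series path of $d_G(x,y)$ unit resistors. Hence $r_G(x,y)\le d_G(x,y)\le d$ for every pair of vertices.

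Substituting this into \eqref{eq1.2} gives
\[
\Kemeny(G)=\frac1{4m}\sum_{x\in V(G)}\sum_{y\in V(G)}d_G(x)d_G(y)r_G(x,y)
\le\frac{d}{4m}\Bigl(\sum_{x\in V(G)}d_G(x)\Bigr)^2=\frac{d}{4m}(2m)^2=md.
\]
Diagonal terms vanish because $r_G(x,x)=0$, so including them in the sum only loosens the bound harmlessly.

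For the second assertion, a connected graph of order $n$ has $m\le\binom n2$ and diameter $d\le n-1$, since a shortest path visits distinct vertices. Therefore $\Kemeny(G)\le\binom n2(n-1)=\tfrac12 n(n-1)^2$.

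The only point needing care is the justification of $r_G\le d_G$. The paper already mentions this inequality in the introduction, so I would cite it, adding a one-line Rayleigh-monotonicity argument. Alternatively, it follows from Thomson's principle: send a unit flow along a shortest path, whose energy equals the path length, and this upper-bounds the resistance. Everything else is direct computation.
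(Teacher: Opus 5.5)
Your proof is correct and is essentially the paper's argument: you bound $r_G(x,y)\le d_G(x,y)\le d$, substitute into the Palacios--Renom identity to get $\frac{d}{4m}(2m)^2=md$, and then use $d\le n-1$ and $m\le\binom n2$. Your Rayleigh-monotonicity (or Thomson's principle) justification of $r_G\le d_G$ is a welcome addition, since the paper simply takes that inequality as known.
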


\begin{proof}
For any two vertices $x,y\in V(G)$, the effective resistance is at most the graph distance, and hence $r_G(x,y)\leq d_G(x,y)\leq d$. By \eqref{eq1.2},
\begin{align*}
\Kemeny(G)
&=\frac{1}{4m}\sum_{x\in V(G)}\sum_{y\in V(G)}d_G(x)d_G(y)r_G(x,y)\\
&\leq\frac{d}{4m}\sum_{x\in V(G)}\sum_{y\in V(G)}d_G(x)d_G(y)\\
&=\frac{d}{4m}\left(\sum_{x\in V(G)}d_G(x)\right)^2
=\frac{d}{4m}(2m)^2
=md.
\end{align*}
The second assertion follows from $d\leq n-1$ and $m\leq\binom{n}{2}$.
\end{proof}

\begin{lemma}[\cite{Bondy2008}]\label{lem:diam-complement}
Let $G$ be a connected graph. If $\diam(G)>3$, then $\diam(\overline{G})\leq2$.
\end{lemma}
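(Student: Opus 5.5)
The statement is the classical fact that if $G$ is connected with $\diam(G)>3$, then $\diam(\overline{G})\le 2$. I would prove it directly from the definition of the complement. The first step is to use $\diam(G)\ge 4$ to choose two vertices $a,b\in V(G)$ with $d_G(a,b)\ge 4$. The key consequence is that no vertex of $G$ is within distance one of both $a$ and $b$, so $N_G[a]\cap N_G[b]=\varnothing$. Indeed, a vertex adjacent to both would give $d_G(a,b)\le 2$. Also $a$ and $b$ are nonadjacent in $G$.

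Next I will take arbitrary distinct vertices $x,y$ and show $d_{\overline{G}}(x,y)\le 2$. If $xy\notin E(G)$, they are adjacent in $\overline{G}$ and we are done. So assume $xy\in E(G)$. I claim that one of $a,b$ is a common neighbor of $x$ and $y$ in $\overline{G}$, or coincides with one of them in a way that still gives distance at most two.

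For this, suppose $a$ is not adjacent in $\overline{G}$ to one of $x,y$. Then that vertex lies in $N_G[a]$. Since $xy\in E(G)$, both $x$ and $y$ are then within $G$-distance $2$ of $a$. Similarly, if $b$ fails, both $x,y$ are within $G$-distance $2$ of $b$. If both $a$ and $b$ fail, the triangle inequality gives $d_G(a,b)\le 2+2$, which is not yet a contradiction. So I will use the finer information. One of $x,y$, say $x$, lies in $N_G[a]$, and one lies in $N_G[b]$. It cannot be $x$ again, since the closed neighborhoods are disjoint, so it is $y$. Then $d_G(a,b)\le d(a,x)+1+d(y,b)\le 3$, which is a contradiction.

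Hence at least one of $a,b$, say $a$, satisfies $x,y\notin N_G[a]$. In particular $a\ne x,y$, and $a$ is adjacent to both $x$ and $y$ in $\overline{G}$. So $d_{\overline{G}}(x,y)\le 2$.

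Finally, $\overline{G}$ is connected by this argument, and the diameter is at most $2$.

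The main obstacle is the case analysis in the third paragraph. One must note that it is the specific endpoint in each closed neighborhood that matters, not the crude distance-$2$ bound, in order to reach $d_G(a,b)\le 3$.
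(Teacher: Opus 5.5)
Your proof is correct and is essentially the paper's argument: both rest on the observation that if an edge $xy$ of $G$ has every relevant vertex in $N_G[x]\cup N_G[y]$, then routing through the edge gives $G$-distance at most $3$. The paper applies this by contradiction to all pairs, concluding that $\{x,y\}$ cannot dominate $G$; you apply it only to a fixed pair $a,b$ with $d_G(a,b)\ge 4$, which has the small bonus of exhibiting the common $\overline{G}$-neighbor explicitly as $a$ or $b$.
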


\begin{proof}
Take any two distinct vertices $x,y\in V(G)$. If $xy\notin E(G)$, then $x$ and $y$ are adjacent in $\overline{G}$. Suppose that $xy\in E(G)$. We claim that some vertex $z$ is nonadjacent to both $x$ and $y$ in $G$. Otherwise, every vertex of $G$ would be adjacent to at least one of $x$ and $y$, and then any two vertices of $G$ would be joined by a path of length at most $3$, contradicting $\diam(G)>3$. Thus such a vertex $z$ exists, and $xzy$ is a path of length two in $\overline{G}$. Hence $\diam(\overline{G})\leq2$.
\end{proof}

\subsection{The minimum bound}

The proof of the first assertion of Conjecture~\ref{conjecture2} uses the following theorem of Kim and Madras~\cite{KimMadras2025}. In the notation introduced in Section~2, their spectral gap is $\lambda_2(\mathcal{L}_G)$.

\begin{lemma}[\cite{KimMadras2025}]\label{lem:KM-gap}
Let $G$ be a graph of order $n\geq2$ such that both $G$ and $\overline{G}$ are connected. Then
$$
\max\{\lambda_2(\mathcal{L}_G),\lambda_2(\mathcal{L}_{\overline{G}})\}\geq\frac{3-\sqrt5}{8(n-1)}.
$$
\end{lemma}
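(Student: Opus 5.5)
The plan is to reduce everything to the diameter-two estimate of Corollary~\ref{cor:Kemeny-diam2}, using the elementary fact that $1/\lambda_2(\mathcal{L}_H)\le\Kemeny(H)$ for any connected $H$. This fact is immediate from \eqref{eq:kemeny-spectrum}, since every term there is positive. Note that the product $(3+\sqrt5)(3-\sqrt5)=4$ gives the identity
$$
\frac{3-\sqrt5}{8(n-1)}=\frac{1}{2(3+\sqrt5)(n-1)}.
$$
So it suffices to show that one of $G,\overline{G}$, call it $H$, satisfies $\Kemeny(H)\le 2(3+\sqrt5)(n-1)$, or at least that $1/\lambda_2(\mathcal{L}_H)$ obeys this bound.

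\textbf{Case split by diameter.} Since both $G$ and $\overline{G}$ are connected, neither is complete, so both have diameter at least $2$.

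If one of them has diameter $2$, Corollary~\ref{cor:Kemeny-diam2} gives $1/\lambda_2\le\Kemeny\le(3+\sqrt5)(n-1)$. This is better than needed by a factor of $2$.

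If one of them has diameter at least $4$, Lemma~\ref{lem:diam-complement} shows that the other has diameter at most $2$, which is the previous case.

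The only remaining case is $\diam(G)=\diam(\overline{G})=3$.

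\textbf{The diameter-three case.} Take $x,y$ with $d_G(x,y)=3$. Then $xy\in E(\overline{G})$. Moreover, no vertex $z$ is adjacent in $G$ to both $x$ and $y$, since otherwise $d_G(x,y)\le 2$. Hence $\{x,y\}$ is a dominating edge of $\overline{G}$, and symmetrically $G$ also has a dominating edge.

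The natural next step is an analogue of Theorem~\ref{thm:diam2-resistance} for a graph $H$ with a dominating edge $xy$. I would run the same voltage argument with the sets $S\subseteq N_H(u)$ and $T\subseteq N_H(v)$. A pair in $S\times T$ need no longer be at distance at most $2$. However, each vertex of $S\cup T$ lies in $N[x]\cup N[y]$, so every such pair is joined by a path of length at most $3$ through $x$ or $y$.

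I would redo the routing of Lemma~\ref{lem:capacity} with these length-$\le3$ routes. The middle edge $xy$, together with the edges into $x$ and $y$, is shared by many routes, and this creates extra congestion. To control it, I would use the degree identity $d_G(w)+d_{\overline{G}}(w)=n-1$ and choose $H\in\{G,\overline{G}\}$ so that the hub edges carry little load. For example, I would choose the graph in which $x$ and $y$ have large degree, so that the weights $1/d_H(u)+1/d_H(v)$ in the Kemeny sum average out to $O(n)$ with constant at most $2(3+\sqrt5)$.

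The main obstacle is this diameter-three, dominating-edge estimate, and in particular obtaining the precise constant $2(3+\sqrt5)$ rather than merely $O(n)$. If the resistance route loses constants, I would instead bound $\lambda_2$ directly by the variational characterization
$$
\lambda_2(\mathcal{L}_H)=\min_{f\perp D_H\mathbf 1}\frac{\E_H(f)}{\sum_w d_H(w)f(w)^2}.
$$
For this I would use a canonical-path (Poincaré) argument, comparing $\sum_{a,b}d(a)d(b)(f(a)-f(b))^2$ with $\E_H(f)$ along the length-$\le3$ routes through the dominating edge. The cross-term weighting would be handled exactly as the weighted Cauchy--Schwarz step in Lemma~\ref{lem:capacity}.
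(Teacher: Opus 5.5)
Your preliminary reductions are correct. The product $(3+\sqrt5)(3-\sqrt5)=4$ gives the identity, and $1/\lambda_2(\mathcal{L}_H)\le\Kemeny(H)$ follows from \eqref{eq:kemeny-spectrum}. The cases where some diameter differs from $3$ follow from Corollary~\ref{cor:Kemeny-diam2} and Lemma~\ref{lem:diam-complement}, and your dominating-edge observation is Lemma~\ref{lem:diam3-dominating-pair}. But the case $\diam(G)=\diam(\overline{G})=3$ is the whole content of the statement, and there you give only intentions, not a proof. The paper does not prove this lemma either: it imports it from Kim and Madras because its own tools stop at this case. Your target $\Kemeny(H)\le 2(3+\sqrt5)(n-1)$ would even improve on what Theorem~\ref{thm:NG-min} obtains there. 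That bound carries an extra $2(n-2)$ and still needs Lemma~\ref{lem:KM-gap}, so closing this case is not routine constant-chasing.

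More concretely, your selection rule cannot work, and your routing cannot give a linear bound in general. Let $G$ be two disjoint copies of $K_k$ joined by one edge $ab$, with $n=2k$, so $\overline{G}$ is $K_{k,k}$ minus the edge $ab$. Both graphs are connected with diameter $3$.
\begin{itemize}
\item The dominating edge your procedure finds in $G$ is $ab$, with $d_G(a)=d_G(b)=k$.
\item A dominating edge of $\overline{G}$ is $uv$ with $u\neq a$ and $v\neq b$ in different cliques, and $d_{\overline{G}}(u)=d_{\overline{G}}(v)=k$. So hub degrees do not distinguish the two graphs.
\item Yet in $G$ the bridge $ab$ forces $r_G(u,v)\ge 1$ for $u,v$ in different cliques, while $1/d_G(u)+1/d_G(v)\le 2/(k-1)$.
\item The test function $\pm1$ on the two cliques gives $\lambda_2(\mathcal{L}_G)\le 2/(k^2-k+1)$, so $\Kemeny(G)=\Omega(n^2)$.
\end{itemize}

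The obstruction is the cut between $N_H(x)\setminus(N_H(y)\cup\{y\})$ and $N_H(y)\setminus(N_H(x)\cup\{x\})$. All your length-three routes between these sides share the edge $xy$. That edge can carry up to $|S||T|$ routes, instead of the $O(|S|+|T|)$ load per edge that makes Lemma~\ref{lem:capacity} work. So such routing can in general certify only $R=O(1)$, i.e.\ $\Kemeny(H)=O(m)$, which Lemma~\ref{lem:general-diameter-bound} already gives. Your Poincar\'e fallback has the same congestion on $xy$.

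What is missing has two parts. First, a global criterion for choosing $H\in\{G,\overline{G}\}$; for instance, one based on how many edges cross that cut, since the edges of $\overline{H}$ across it are exactly the non-edges of $H$ across it. Second, an actual congestion or conductance computation that produces the constant $(3-\sqrt5)/8$. That is precisely the Kim--Madras theorem, and your proposal leaves it unproved.
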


Recall that a set $S\subseteq V(H)$ is a \emph{dominating set} if every vertex in $V(H)\setminus S$ has a neighbor in $S$. We first show that a dominating set of size two controls all but the first nonzero term in the spectral formula \eqref{eq:kemeny-spectrum}.

\begin{lemma}\label{lem:dominating-pair-kemeny}
Let $H$ be a connected graph of order $n\geq3$ having a dominating set of size two. Then
$$
\Kemeny(H)\leq\frac{1}{\lambda_2(\mathcal{L}_H)}+2(n-2).
$$
\end{lemma}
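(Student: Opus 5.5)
The plan is to use Cauchy interlacing to discard the two smallest nonzero normalized Laplacian eigenvalues, and to control the remaining ones through resistances to the grounded dominating pair. Let $\{a,b\}$ be a dominating set of $H$ and put $W=V(H)\setminus\{a,b\}$, so $|W|=n-2$. Let $M$ be the principal submatrix of $\mathcal{L}_H$ indexed by $W$. Then
$$
M=D_W^{-1/2}L_W D_W^{-1/2},
$$
where $L_W$ is the principal submatrix of $L_H$ on $W$ (the Laplacian with $a,b$ grounded) and $D_W$ is the diagonal matrix of the degrees $d_H(x)$, $x\in W$. Since $H$ is connected, $L_W$ is positive definite. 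Write $\mu_1\le\cdots\le\mu_{n-2}$ for the eigenvalues of $M$. Cauchy interlacing for a principal submatrix of codimension two gives $\lambda_{j+2}(\mathcal{L}_H)\ge\mu_j$ for $1\le j\le n-2$. Combining this with \eqref{eq:kemeny-spectrum},
$$
\Kemeny(H)=\frac{1}{\lambda_2(\mathcal{L}_H)}+\sum_{j=1}^{n-2}\frac{1}{\lambda_{j+2}(\mathcal{L}_H)}\le\frac{1}{\lambda_2(\mathcal{L}_H)}+\operatorname{tr}(M^{-1}).
$$
It therefore suffices to show $\operatorname{tr}(M^{-1})\le 2(n-2)$.

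Next, express the trace through resistances. We have $M^{-1}=D_W^{1/2}L_W^{-1}D_W^{1/2}$, so
$$
\operatorname{tr}(M^{-1})=\sum_{x\in W}d_H(x)\,(L_W^{-1})_{xx}.
$$
By the standard electrical interpretation, $(L_W^{-1})_{xx}$ is the effective resistance $r^*(x)$ between $x$ and the vertex $g$ obtained by identifying $a$ and $b$ (the ground). Multiple edges may arise from this identification, which is harmless.

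The key step is the bound $r^*(x)\le 2/d_H(x)$ for every $x\in W$. I would prove it with Thomson's principle, which says the resistance is at most the energy of any unit flow from $x$ to $g$. Send current $1/d_H(x)$ along each edge at $x$.
\begin{itemize}
\item If the neighbour is $a$ or $b$, that current has already reached $g$.
\item If the neighbour is $y\in W$, then $y$ has a neighbour in $\{a,b\}$ because $\{a,b\}$ is dominating. Route the current $1/d_H(x)$ onward along one such edge into $g$.
\end{itemize}
Distinct neighbours $y$ use distinct second edges, and these second edges are disjoint from the edges at $x$ (an edge $xa$ or $xb$ is never used as a second edge, since $y\neq x$). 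Hence every edge carries current at most $1/d_H(x)$, and at most $2d_H(x)$ edges are used. The energy is therefore at most $2d_H(x)\cdot d_H(x)^{-2}=2/d_H(x)$.

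Summing gives $\operatorname{tr}(M^{-1})\le\sum_{x\in W}d_H(x)\cdot 2/d_H(x)=2(n-2)$, which is the claimed inequality. The main obstacle is this identification step. One has to check that $(L_W^{-1})_{xx}$ really is the resistance to the merged ground, which follows from the Schur complement or voltage interpretation of the grounded Laplacian. One also has to verify that the constructed flow is a genuine unit flow with no edge overloaded, which is exactly where domination by the pair is used. The interlacing step and the final summation are routine.
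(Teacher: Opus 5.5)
Your proof is correct and follows essentially the same route as the paper: codimension-two Cauchy interlacing reduces the problem to bounding $\operatorname{tr}(M^{-1})=\sum_{x\in W}d_H(x)(L_W^{-1})_{xx}$. Each grounded resistance is then bounded by $2/d_H(x)$ using the one- and two-edge paths from $x$ into the dominating pair. The only cosmetic difference is in that last bound. You use Thomson's principle with an explicit uniform flow, which gives energy $(q_x+2p_x)/d_H(x)^2$, where $q_x$ and $p_x$ count the neighbours of $x$ in $\{a,b\}$ and in $W$. The paper instead applies Rayleigh monotonicity to the same subnetwork of parallel branches and gets the slightly sharper value $1/(q_x+p_x/2)$; both are at most $2/d_H(x)$.
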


\begin{proof}
Let $S=\{a,b\}$ be a dominating set of $H$, put $W=V(H)\setminus S$, and let $M=\mathcal{L}_H[W]$ be the principal submatrix of the normalized Laplacian indexed by $W$. Write $0=\lambda_1(\mathcal{L}_H)<\lambda_2(\mathcal{L}_H)\leq\cdots\leq\lambda_n(\mathcal{L}_H)$ for the eigenvalues of $\mathcal{L}_H$, and let $0<\mu_1\leq\cdots\leq\mu_{n-2}$ be the eigenvalues of $M$.

Let $Q=L_H[W]$. The matrix $Q$ is positive definite. Indeed, if $z^{\top}Qz=0$, then extending $z$ by zero on $S$ produces a function on $V(H)$ with zero Dirichlet energy. Since $H$ is connected and the extension vanishes on $S$, it must be identically zero. Thus $Q$, and hence $M$, is positive definite.

By the Cauchy interlacing theorem,
$$
\lambda_i(\mathcal{L}_H)\leq\mu_i\leq\lambda_{i+2}(\mathcal{L}_H)\qquad(1\leq i\leq n-2).
$$
Therefore
$$
\sum_{i=3}^{n}\frac{1}{\lambda_i(\mathcal{L}_H)}\leq\sum_{i=1}^{n-2}\frac{1}{\mu_i}=\operatorname{tr}(M^{-1}).
$$
Using \eqref{eq:kemeny-spectrum}, we obtain
\begin{equation}\label{eq:dom-pair-first}
\Kemeny(H)\leq\frac{1}{\lambda_2(\mathcal{L}_H)}+\operatorname{tr}(M^{-1}).
\end{equation}

Let $D_W=\operatorname{diag}(d_H(x):x\in W)$. Since $M=D_W^{-1/2}QD_W^{-1/2}$, we have $M^{-1}=D_W^{1/2}Q^{-1}D_W^{1/2}$ and hence
\begin{equation}\label{eq:trace-grounded}
\operatorname{tr}(M^{-1})=\sum_{x\in W}d_H(x)(Q^{-1})_{xx}.
\end{equation}
For a fixed $x\in W$, solving $Qh=e_x$ and extending $h$ by zero on $S$ gives the voltage produced by injecting one unit of current at $x$ and grounding the vertices of $S$. Equivalently, the vertices of $S$ are wired together and serve as a single grounded terminal. Consequently, $(Q^{-1})_{xx}=h(x)$ is the effective resistance from $x$ to the wired set $S$.

Fix $x\in W$, and set $p_x=|N_H(x)\cap W|$ and $q_x=|N_H(x)\cap S|$. Since $S$ is dominating, $q_x\geq1$, and $d_H(x)=p_x+q_x$. For every $y\in N_H(x)\cap W$, choose a vertex $s_y\in S$ adjacent to $y$, which is possible because $S$ dominates $H$. Consider the subnetwork consisting of the $q_x$ edges from $x$ directly to $S$ and, for each $y\in N_H(x)\cap W$, the two-edge path $x$--$y$--$s_y$. After the vertices of $S$ are identified, these are parallel branches: $q_x$ branches of resistance $1$ and $p_x$ branches of resistance $2$. The effective resistance of this subnetwork from $x$ to $S$ is therefore
$$
\frac{1}{q_x+p_x/2}.
$$
After wiring the vertices of $S$, this subnetwork is obtained by deleting all other edges. Since deleting edges can only increase effective resistance, Rayleigh monotonicity~\cite{Doyle1984} gives
$$
(Q^{-1})_{xx}\leq\frac{1}{q_x+p_x/2}\leq\frac{2}{p_x+q_x}=\frac{2}{d_H(x)}.
$$
Substituting this estimate into \eqref{eq:trace-grounded} yields $\operatorname{tr}(M^{-1})\leq2|W|=2(n-2)$. Together with \eqref{eq:dom-pair-first}, this proves the lemma.
\end{proof}

The next elementary observation explains why dominating pairs arise naturally in the remaining diameter-three case.

\begin{lemma}\label{lem:diam3-dominating-pair}
If $\diam(\overline{H})=3$, then $H$ has a dominating set of size two.
\end{lemma}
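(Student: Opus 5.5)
The plan is to take a pair of vertices realizing the diameter of $\overline{H}$ and show directly that this pair dominates $H$. Since $\diam(\overline{H})=3$, the graph $\overline{H}$ is connected (diameter is finite). Moreover, there exist vertices $a,b\in V(H)$ with $d_{\overline{H}}(a,b)=3$. I will show that $\{a,b\}$ is a dominating set of $H$.

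First, $a\neq b$ and $ab\notin E(\overline{H})$, since their $\overline{H}$-distance is $3>1$. Hence $ab\in E(H)$, so $a$ and $b$ dominate each other in $H$. Next take any $x\in V(H)\setminus\{a,b\}$. Suppose $x$ had no neighbor in $\{a,b\}$ within $H$. Then $xa\in E(\overline{H})$ and $xb\in E(\overline{H})$, so $a$--$x$--$b$ is a path of length two in $\overline{H}$. This gives $d_{\overline{H}}(a,b)\le 2$, a contradiction. Hence every vertex outside $\{a,b\}$ is adjacent in $H$ to $a$ or $b$, and $\{a,b\}$ is a dominating set of size two.

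There is no real obstacle. The only points needing care are that $a\neq b$, so the set genuinely has size two, and that the argument uses nothing beyond $d_{\overline{H}}(a,b)\ge 3$. The same reasoning therefore applies whenever $\diam(\overline{H})\ge 3$. Connectivity of $H$ is not needed for this lemma; it is only required when the lemma is later combined with Lemma~\ref{lem:dominating-pair-kemeny}.
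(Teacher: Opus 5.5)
Your proposal is correct and follows essentially the same argument as the paper. Both take a pair $a,b$ with $d_{\overline{H}}(a,b)=3$, note that $ab\in E(H)$, and rule out an undominated vertex $x$ because it would give a path $a$--$x$--$b$ in $\overline{H}$.
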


\begin{proof}
Choose vertices $a,b\in V(H)$ with $d_{\overline{H}}(a,b)=3$. Since $a$ and $b$ are nonadjacent in $\overline{H}$, they are adjacent in $H$. We claim that $\{a,b\}$ dominates $H$. Otherwise, some vertex $x\notin\{a,b\}$ is adjacent to neither $a$ nor $b$ in $H$. Then $x$ is adjacent to both $a$ and $b$ in $\overline{H}$, so $a$--$x$--$b$ is a path of length two in $\overline{H}$, contradicting $d_{\overline{H}}(a,b)=3$.
\end{proof}

\begin{theorem}\label{thm:NG-min}
Let $G$ be a graph of order $n\geq2$, with the convention that $\Kemeny(H)=\infty$ when $H$ is disconnected. Then
$$
\min\{\Kemeny(G),\Kemeny(\overline{G})\}\leq(8+2\sqrt5)n-(10+2\sqrt5).
$$
In particular, $\min\{\Kemeny(G),\Kemeny(\overline{G})\}=O(n)$.
\end{theorem}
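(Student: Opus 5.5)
The plan is to split into cases according to the connectivity and diameters of $G$ and $\overline{G}$, and to check in each case that the bound $(8+2\sqrt5)n-(10+2\sqrt5)$ dominates. For $n=2$ exactly one of $G,\overline{G}$ is $K_2$, whose Kemeny constant is $1/\lambda_2=1/2$; the right-hand side equals $6+2\sqrt5$, so this case is immediate. Assume $n\geq3$.

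\textbf{Case A: one of $G,\overline{G}$ is disconnected.} Say $\overline{G}$ is disconnected. Then any two vertices in different components of $\overline{G}$ are adjacent in $G$. Two vertices in the same component share a common neighbour in $G$, namely any vertex of another component. Hence $\diam(G)\leq2$. If $\diam(G)=1$, then $G=K_n$ and $\Kemeny(K_n)=(n-1)^2/n<n$. Otherwise Corollary~\ref{cor:Kemeny-diam2} gives $\Kemeny(G)\leq(3+\sqrt5)(n-1)$. This is at most the target, since $(8+2\sqrt5)n-(10+2\sqrt5)-(3+\sqrt5)(n-1)=(5+\sqrt5)n-(7+\sqrt5)>0$.

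\textbf{Case B: both are connected.} By symmetry and Lemma~\ref{lem:diam-complement}, one of $G,\overline{G}$ has diameter at most $2$, unless both have diameter exactly $3$. If some member has diameter at most $2$, the bound from Case A applies to it (treating $K_n$ separately as before). In the remaining case, $\diam(G)=\diam(\overline{G})=3$.

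By Lemma~\ref{lem:diam3-dominating-pair} applied with $H=G$ and with $H=\overline{G}$, both $G$ and $\overline{G}$ have dominating sets of size two. Lemma~\ref{lem:dominating-pair-kemeny} then gives
$$\Kemeny(H)\leq\frac{1}{\lambda_2(\mathcal{L}_H)}+2(n-2)\qquad\text{for both }H\in\{G,\overline{G}\}.$$
By Lemma~\ref{lem:KM-gap}, one of the two graphs, call it $H$, satisfies
$$\frac{1}{\lambda_2(\mathcal{L}_H)}\leq\frac{8(n-1)}{3-\sqrt5}=2(3+\sqrt5)(n-1).$$
For that graph,
$$\Kemeny(H)\leq(6+2\sqrt5)(n-1)+2(n-2)=(8+2\sqrt5)n-(10+2\sqrt5),$$
which is exactly the stated constant. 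This confirms that the diameter-$3$ case is the extremal one driving the bound.

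The main obstacle is ensuring the case split is exhaustive. In particular, we need that both graphs, rather than just one, have a dominating pair in the diameter-$3$ case, because the spectral-gap lemma does not say which graph has the large gap. This is handled by applying Lemma~\ref{lem:diam3-dominating-pair} to each graph. We also need to check that the diameter-$\leq 2$ bound is never larger than the final constant, which is the arithmetic verified in Case A.
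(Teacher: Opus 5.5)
Your proposal is correct and follows the paper's proof essentially step for step. The disconnected case reduces to a graph of diameter at most two, handled by Corollary~\ref{cor:Kemeny-diam2} or by $K_n$. In the connected case, Lemma~\ref{lem:diam-complement} leaves only $\diam(G)=\diam(\overline{G})=3$, where Lemmas~\ref{lem:diam3-dominating-pair}, \ref{lem:dominating-pair-kemeny} and \ref{lem:KM-gap} combine exactly as in the paper. Your separate treatment of $n=2$ is harmless but unnecessary: the diameter-three case, the only place Lemma~\ref{lem:dominating-pair-kemeny} and its hypothesis $n\geq3$ are used, already forces $n\geq4$.
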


\begin{proof}
We first consider the case in which one of $G$ and $\overline{G}$ is disconnected. Suppose, without loss of generality, that $G$ is disconnected. Then $\overline{G}$ is connected and has diameter at most two. Indeed, if two vertices lie in different components of $G$, then they are adjacent in $\overline{G}$; if they lie in the same component, choose a vertex in another component, which gives a path of length two between them in $\overline{G}$. If $\diam(\overline{G})=2$, Corollary~\ref{cor:Kemeny-diam2} gives $\Kemeny(\overline{G})\leq(3+\sqrt5)(n-1)$. If $\diam(\overline{G})=1$, then $\overline{G}=K_n$. Since the normalized Laplacian eigenvalues of $K_n$ are $0$ and $n/(n-1)$ with multiplicity $n-1$, \eqref{eq:kemeny-spectrum} gives $\Kemeny(K_n)=(n-1)^2/n<(3+\sqrt5)(n-1)$. Thus in either case $\min\{\Kemeny(G),\Kemeny(\overline{G})\}\leq(3+\sqrt5)(n-1)$. Since $(3+\sqrt5)(n-1)\leq(8+2\sqrt5)n-(10+2\sqrt5)$ for $n\geq2$, the claimed bound follows. The case in which $\overline{G}$ is disconnected is symmetric.

We may now assume that both $G$ and $\overline{G}$ are connected. If either graph has diameter two, Corollary~\ref{cor:Kemeny-diam2} and the preceding comparison again give the desired bound. Hence suppose that both diameters are at least three. Lemma~\ref{lem:diam-complement} then implies $\diam(G)=\diam(\overline{G})=3$. Applying Lemma~\ref{lem:diam3-dominating-pair} first to $H=G$ and then to $H=\overline{G}$ shows that both $G$ and $\overline{G}$ have dominating sets of size two.

By Lemma~\ref{lem:KM-gap}, one of $G$ and $\overline{G}$, say $H$, satisfies
$$
\lambda_2(\mathcal{L}_H)\geq\frac{3-\sqrt5}{8(n-1)}.
$$
Since $H$ has a dominating set of size two, Lemma~\ref{lem:dominating-pair-kemeny} yields
\begin{align*}
\Kemeny(H)
&\leq\frac{1}{\lambda_2(\mathcal{L}_H)}+2(n-2)\\
&\leq\frac{8(n-1)}{3-\sqrt5}+2(n-2)\\
&=2(3+\sqrt5)(n-1)+2(n-2)\\
&=(8+2\sqrt5)n-(10+2\sqrt5).
\end{align*}
Since $\min\{\Kemeny(G),\Kemeny(\overline{G})\}\leq\Kemeny(H)$, the proof is complete.
\end{proof}

\subsection{The product bound}

\begin{theorem}\label{thm:NG-product}
Let $G$ be a graph of order $n$ such that both $G$ and $\overline{G}$ are connected. Then
$$
\Kemeny(G)\Kemeny(\overline{G})\leq\frac{3+\sqrt5}{2}\,n^4.
$$
In particular, $\Kemeny(G)\Kemeny(\overline{G})=O(n^4)$.
\end{theorem}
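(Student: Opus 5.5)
The plan is a case split on the diameters of $G$ and $\overline{G}$, pairing the linear bound from Corollary~\ref{cor:Kemeny-diam2} with the crude bound $\Kemeny(H)\le m\,\diam(H)$ from Lemma~\ref{lem:general-diameter-bound}. Since both $G$ and $\overline{G}$ are connected, neither graph is complete: if $G=K_n$ then $\overline{G}$ is edgeless and hence disconnected, and likewise for $\overline{G}$. So both diameters are at least $2$. By Lemma~\ref{lem:diam-complement}, if one of them has diameter greater than $3$, the other has diameter at most $2$, hence exactly $2$. Therefore either one of $G,\overline{G}$ has diameter $2$, or both have diameter exactly $3$.

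\textbf{Case 1: one graph has diameter $2$.} By symmetry, suppose $\diam(\overline{G})=2$.
\begin{itemize}
\item Corollary~\ref{cor:Kemeny-diam2} gives $\Kemeny(\overline{G})\le(3+\sqrt5)(n-1)$.
\item The second assertion of Lemma~\ref{lem:general-diameter-bound} gives $\Kemeny(G)\le\frac12 n(n-1)^2$.
\end{itemize}
Multiplying these,
$$
\Kemeny(G)\Kemeny(\overline{G})\le\frac{3+\sqrt5}{2}\,n(n-1)^3\le\frac{3+\sqrt5}{2}\,n^4 .
$$
This case is exactly where the constant $\frac{3+\sqrt5}{2}$ in the statement comes from.

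\textbf{Case 2: both graphs have diameter $3$.} Let $m$ and $\overline{m}$ denote the numbers of edges of $G$ and $\overline{G}$.
\begin{itemize}
\item Lemma~\ref{lem:general-diameter-bound} gives $\Kemeny(G)\le 3m$ and $\Kemeny(\overline{G})\le 3\overline{m}$.
\item Since $m+\overline{m}=\binom n2$, AM--GM gives $m\overline{m}\le\frac14\binom n2^2$.
\end{itemize}
Hence
$$
\Kemeny(G)\Kemeny(\overline{G})\le 9m\overline{m}\le\frac{9}{4}\binom n2^2=\frac{9}{16}n^2(n-1)^2\le\frac{9}{16}n^4 .
$$
Because $\frac{9}{16}<\frac{3+\sqrt5}{2}\approx 2.618$, the stated bound follows.

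I expect no step to be a real obstacle. The only points needing care are that the case split is exhaustive, which rests on the exclusion of diameter $1$ and on Lemma~\ref{lem:diam-complement}, and the final comparison of constants. The key realization is that in the diameter-$3$ case the edge counts of $G$ and $\overline{G}$ trade off against each other. This makes the $O(n^3)\cdot O(n^3)$ appearance of the bound $\Kemeny\le md$ collapse to $O(n^4)$, so the spectral machinery is not needed for the product.
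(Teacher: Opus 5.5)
Your proposal is correct and matches the paper's proof essentially verbatim. The paper uses the same two cases: one graph of diameter two, where Corollary~\ref{cor:Kemeny-diam2} is multiplied by the $\frac12 n(n-1)^2$ bound; and both graphs of diameter three, where it uses $\Kemeny(G)\le 3m$, $\Kemeny(\overline{G})\le 3\overline{m}$ and AM--GM on $m+\overline{m}=\binom n2$. Your explicit observation that neither graph can be complete, so the split is exhaustive, is a small point the paper leaves implicit.
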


\begin{proof}
We distinguish two cases according to the minimum of the diameters of $G$ and $\overline{G}$.

\noindent\textbf{Case 1.} Suppose that $\min\{\diam(G),\diam(\overline{G})\}=2$. Without loss of generality, assume that $\diam(G)=2$. By Corollary~\ref{cor:Kemeny-diam2}, $\Kemeny(G)\leq(3+\sqrt5)(n-1)$. By Lemma~\ref{lem:general-diameter-bound},
$$
\Kemeny(\overline{G})\leq\frac12n(n-1)^2.
$$
Therefore
$$
\Kemeny(G)\Kemeny(\overline{G})\leq\frac{3+\sqrt5}{2}\,n(n-1)^3<\frac{3+\sqrt5}{2}\,n^4.
$$

\noindent\textbf{Case 2.} Suppose that $\min\{\diam(G),\diam(\overline{G})\}\geq3$. If $\diam(G)>3$, then Lemma~\ref{lem:diam-complement} gives $\diam(\overline{G})\leq2$, a contradiction. By symmetry, $\diam(\overline{G})>3$ is also impossible. Hence $\diam(G)=\diam(\overline{G})=3$. Let $m=|E(G)|$ and $\overline{m}=|E(\overline{G})|$. Lemma~\ref{lem:general-diameter-bound} gives $\Kemeny(G)\leq3m$ and $\Kemeny(\overline{G})\leq3\overline{m}$. Since $m+\overline{m}=\binom{n}{2}$,the arithmetic-geometric mean inequality gives
$$
m\overline{m}\leq\left(\frac{m+\overline{m}}{2}\right)^2=\frac14\binom{n}{2}^2.
$$
Consequently,
$$
\Kemeny(G)\Kemeny(\overline{G})\leq9m\overline{m}\leq\frac94\binom{n}{2}^2=\frac{9}{16}n^2(n-1)^2<\frac{9}{16}n^4.
$$
Since $\frac{9}{16}<\frac{3+\sqrt5}{2}$, the desired bound follows.
\end{proof}

Theorem~\ref{thm:NG-min} proves the first assertion of Conjecture~\ref{conjecture2}, while Theorem~\ref{thm:NG-product} proves the second. Hence Conjecture~\ref{conjecture2} is completely resolved.

\section{Concluding remarks}

In this paper, we have resolved two conjectures proposed by Kim et al.~\cite{Kim2026} concerning Kemeny's constant of graphs. First, we proved that every connected graph $G$ of diameter two satisfies $\Kemeny(G)=O(n)$ by establishing a pointwise effective-resistance estimate in terms of the endpoint degrees. Second, we resolved their Nordhaus--Gaddum conjecture in full: for every graph $G$ of order $n$, $\min\{\Kemeny(G),\Kemeny(\overline{G})\}=O(n)$, and whenever both $G$ and $\overline{G}$ are connected, $\Kemeny(G)\Kemeny(\overline{G})=O(n^4)$.

Two different mechanisms underlie these results. The diameter-two theorem is driven by the energy lower bound of Section~3, which converts local voltage separation into a global effective-resistance estimate. For the Nordhaus--Gaddum minimum problem, the only nontrivial connected case remaining after the diameter reduction is $\diam(G)=\diam(\overline{G})=3$. In this situation, each graph has a dominating set of size two. Lemma~\ref{lem:dominating-pair-kemeny} shows that such a dominating pair controls all terms in the spectral representation of Kemeny's constant except the reciprocal spectral-gap term, and the Nordhaus--Gaddum theorem of Kim and Madras~\cite{KimMadras2025} controls that remaining term.

It remains natural to determine the best possible leading constant in a universal linear bound for $\min\{\Kemeny(G),\Kemeny(\overline{G})\}$ and to identify graph families that are extremal or asymptotically extremal for this problem. It would also be interesting to sharpen the constant in the effective-resistance bound for diameter-two graphs.

\section{Declaration of competing interest}
		The authors declare that they have no known competing financial interests or personal relationships that could have appeared to influence the work reported in this paper.
\section{Data availability}
		No data was used for the research described in the article.
\section{Acknowledgments}
	    The support of the National Natural Science Foundation of China (through grant no. 12171414), Taishan Scholars Special Project of Shandong Province and Graduate Innovation Foundation of Yantai University (through grant no. GGIFYTU2616), is greatly acknowledged.

\end{document}